\documentclass[12pt]{amsart}
\usepackage[utf8]{inputenc}
\usepackage{amsmath,amsthm,amssymb}
\usepackage{xcolor}

\usepackage[british]{babel}

\usepackage[shortlabels]{enumitem}
\usepackage{hyperref}
\usepackage{mathtools}
\usepackage{graphicx}

\newtheorem{theorem}{Theorem}[section]

\newtheorem{lemma}[theorem]{Lemma}

\newtheorem{claim}[theorem]{Claim}
\mathtoolsset{showonlyrefs}

\newcounter{stepctr}
\newif\ifinsteppage

\newenvironment{steps}{%
	\setcounter{stepctr}{0}%
	\insteppagefalse
}{%
	\par\bigskip
}

\newcommand{\step}[1]{%
	\ifinsteppage
	\par\bigskip
	\fi
	\insteppagetrue
	\refstepcounter{stepctr}%
	\noindent\textbf{Step \thestepctr:}\if\relax\detokenize{#1}\relax\else\space#1.\fi\par
}

\newcommand{\demm}{\de^{-\e}}

\numberwithin{equation}{section}

\newcommand{\R}{\mathbb{R}}

\newcommand{\dm}{\de^{-O(\eta)}}

\newcommand{\ka}{\kappa}

\newcommand{\de}{\delta}
\newcommand{\e}{\epsilon}

\newcommand{\sg}{\sigma}

\newcommand{\dmm}{\de^{-\eta}}

\newcommand{\cn}[1]{N_\de({#1})}
\newtheorem*{ack*}{Acknowledgment}

\author{William O'Regan}
\begin{document}
	\title{A note on the asymmetric discretised ring theorem}

	\keywords{}
	\thanks{WOR is supported in part by an NSERC Alliance grant administered by Pablo Shmerkin and Joshua Zahl}
	\begin{abstract}
We prove an asymmetric version of the discretised ring theorem.
	\end{abstract}
	\maketitle
    \section{Introduction}
    The aim of this note is to prove the following.
    \begin{theorem}\label{thm.main}
        Let $0 < \ka < \sg < 1$. There exists $\e > 0$ so that the following holds for all $\de > 0$ small enough.
        
        Let $A,B,C \subset [1,2]$ be $\de$-separated. Suppose that $|A| = \de^{-\sg}$ and suppose that $B,C$ are $(\de,\ka,\demm)$-sets. Then
        \begin{equation}\label{eq.result}
            \cn{A+B} + \cn{AC} > \demm |A|.
        \end{equation}
    \end{theorem}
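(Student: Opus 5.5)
We argue by contradiction: suppose $\cn{A+B} + \cn{AC} \le \demm |A|$. The plan is to turn this into a symmetric sum-product statement to which Bourgain's discretised ring/sum-product theorem applies. A direct application does not work, because $A$ is not assumed to be a Frostman set and $B,C$ are different sets. We therefore pass to a large structured piece of $A$. The standard framework is the Plünnecke--Ruzsa calculus for $\de$-covering numbers together with the Balog--Szemerédi--Gowers theorem.

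\emph{Step 1 (reduce to a set with large additive and multiplicative stabilisers).} From $\cn{A+B}\le \demm|A|$ and $|B|\gtrsim \de^{-\ka+\e}$, the Ruzsa covering lemma and Plünnecke give $\cn{A+B-B}\le \de^{-O(\e)}|A|$. Using the $(\de,\ka)$ non-concentration of $B$ and iterating, we get $\cn{A + kB - kB} \le \de^{-O_k(\e)}|A|$. Likewise $\cn{A C C^{-1}\cdots}\le \de^{-O_k(\e)}|A|$, using that multiplication on $[1,2]$ is bi-Lipschitz to additive structure via $\log$. Let $X = kB-kB$ and $Y = (CC^{-1})^{k}$. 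Then $A$ is approximately invariant under translation by $X$ and dilation by $Y$, in the sense that $\cn{A+X}\lesssim \de^{-O(\e)}|A|$ and $\cn{AY}\lesssim \de^{-O(\e)}|A|$.

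\emph{Step 2 (grow $X$ and $Y$ into a ring-like object).} By Bourgain's result, applied to $B$ and $C$ separately, which are themselves $(\de,\ka)$-sets, bounded sum-product expressions in $B$ (respectively $C$) have $\de$-covering number at least $\de^{-\ka-\eta}$ for some $\eta=\eta(\ka)>0$. Combining translation-invariance by $X$ with dilation-invariance by $Y$, $A$ is approximately invariant under $y_1X + y_2X+\dots$ for $y_i\in Y$ and under $X\cdot Y$-type expressions. The aim is an expression $P(B,C)$, polynomial with bounded complexity, satisfying both
\[
\cn{A+P(B,C)}\le\de^{-O(\e)}|A| \quad\text{and}\quad \cn{P(B,C)}\ge \de^{-\sg-\eta'}.
\]
These together contradict $\cn{A+P}\ge\cn{P}$ once $\e\ll\eta'$.

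To get the second inequality, we iterate Bourgain's theorem in its "expansion until full dimension" form: bounded polynomial expressions in a $(\de,\ka)$-set eventually cover a $\de^{\e}$ proportion of $[0,1]$ at scale $\de$. This exceeds $\de^{-\sg}$ because $\sg<1$.

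\emph{Main obstacle.} The difficulty is that the invariance in Step 1 is only approximate (covering numbers, not exact stabilisers). Mixing additive and multiplicative expressions therefore loses control unless we pass to a single subset $A'\subset A$ with $|A'|\ge\de^{O(\e)}|A|$ on which both structures hold simultaneously. I would handle this with an asymmetric Balog--Szemerédi--Gowers argument: find $A'$ and a large translate-set such that $A'-A'$ has small covering number. Then run Bourgain's argument on the approximate ring generated by $A'-A'$ together with ratios from $C$, following the structure of the discretised ring theorem proof in which a set $S$ with $\cn{S+S}+\cn{SS}$ small and $S$ containing a $(\de,\ka)$-piece yields a contradiction. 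Making these steps compatible, with exponent losses $\de^{-O(\e)}$ kept uniform, is where most of the work lies.
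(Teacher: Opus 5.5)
Your outline does not yet contain a proof. The step you flag as the ``main obstacle'' is the entire content of the theorem, and the tools you name do not supply it.

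Pl\"unnecke--Ruzsa controls iterated sums $A+kB$ and iterated products $AC^{(k)}$ separately, after passing to a large subset of $A$. There is no Pl\"unnecke-type inequality mixing the two operations. Small $\cn{A+X}$ and small $\cn{AY}$ do not give small $\cn{A+yX}$ for $y\in Y$, let alone small $\cn{A+y_1X+y_2X+\cdots}$. What you actually get is that $A\cap yA$ is large for many $y$. These overlaps depend on $y$, so the ``approximate invariances'' cannot be chained into the bound $\cn{A+P(B,C)}\le\de^{-O(\e)}|A|$ that your contradiction needs. Your lower bound on $\cn{P(B,C)}$ is fine: fixing one difference $x=b-b'$ with $|x|\geq\de^{O(\e)}$ reduces it to growth of sums of products from $C$. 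The upper bound is the missing piece.

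The BSG repair also fails as stated. Since $|B|\approx\de^{-\ka}$ may be much smaller than $|A|=\de^{-\sg}$, small $\cn{A+B}$ only gives structure at the size of $B$. It does not force a large $A'\subset A$ with $\cn{A'-A'}\le\de^{-O(\e)}|A'|$; think of $A$ as a random-like union of translates of an additively structured $B$. Running Bourgain's argument on ``the approximate ring generated by $A'-A'$ together with ratios from $C$'' is not specified precisely enough to check.

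The paper never bounds the covering number of a mixed sumset; it counts solutions instead.
\begin{enumerate}
\item It chooses $n_0$ with $\cn{C^{((m+1)n_0)}}\le\dmm\cn{C^{(n_0)}}$. Then the products $C'^{(k)}$, with $C'=C^{(n_0)}$, remain $(\de,\ka,\de^{-O(\eta)})$-sets.
\item It applies the Katz--Tao chain lemma to the maps $(a,b,\tilde c)\mapsto a\prod_\ell c_\ell$ and $(a,b,\tilde c)\mapsto a+b$ on $A\times B\times(C')^n$. This produces many chains $a_0\to a_1\to\cdots\to a_{2m}$ in $A$, alternating multiplication by $n$-fold products from $C'$ with translation by differences from $B$.
\item After pigeonholing $a_0$ and the $b_j$, keeping $|b_{2i-1}-b_{2i}|\geq\rho$, it gets $\gtrsim\de^{O(\eta)}|C'|^{mn}$ tuples $\tilde c$. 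For each, the explicit polynomial $\phi(\tilde c)=b_{2m-1}-b_{2m}+\sum_i u_i\prod_{j\geq i}\prod_\ell c_{2j-2,\ell}$ lies within $O(\de)$ of $A$.
\item Hence $|A|$ is at least this count divided by the largest fibre of $\phi$.
\item A change of variables separates the $m$ product blocks. Pointwise Fourier decay of $\mu^{\boxtimes n}$ (Bourgain; Orponen--de Saxc\'e--Shmerkin) then bounds the fibres by $\de^{m\tau-O(\eta)}|C'|^{mn}$.
\item Since $m\tau>\sg$, this gives $|A|>\de^{-\sg}$, a contradiction.
\end{enumerate}
This solution-counting step, which never needs a mixed invariance of $A$ itself, is the idea your proposal is missing.
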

    Here $\cn{\ \cdot \ }$ denotes the $\de$-covering number.
    To the best of our knowledge, only when $2\kappa > \sigma$ is the theorem above covered in the literature. This is done by an application of the $ABC$ sum-product theorem \cite[Theorem 1.7]{orpshabc}.  We leave this deduction as an exercise to the interested reader. 

    The case when $A=B=C$ was proved by Bourgain in \cite{bou03} when $\kappa = \sigma,$ motivated by the connections between the discretised ring theorem and other problems in geometric combinatorics found by Katz and Tao in \cite{kat}, and then for $\kappa < \sigma$ by Bourgain and Gamburd in \cite{bougam}. Since then a variety of alternative proofs have emerged. See for example \cite{gut}, \cite{o2025simple}, \cite{ORegan2026}.  

    In finite fields the analogue of Theorem \ref{thm.main} was obtained by Bourgain \cite[Theorem 1.1]{Bourgain2005SumProduct}. This used an exponential sum estimate obtained in \cite{Bourgain2005DiffieHellman} which built upon the symmetric sum-product theorem in prime fields obtained in \cite{BourgainKatzTao2004}. Our proof of Theorem \ref{thm.main} adapts his strategy to the discretised setting, using a pointwise Fourier decay theorem for multiplicative convolutions \cite[Theorem 6]{bou}, \cite[Theorem 1.5]{OrponenDeSaxceShmerkin2023}, in place of the exponential sum estimate.

    \subsection*{Acknowledgements} We thank Tuomas Orponen for bringing this to our attention. This material is based upon work supported by the Swedish Research Council under grant no. 2021-06594 whilst the author was in residence at Institut Mittag-Leffler in Djursholm, Sweden during the autumn of 2026. Thanks are given to the organisers, administrators and the participants. Thanks are extended to Pablo Shmerkin and Joshua Zahl for support.
\section{Preliminary definitions and results}
\subsection*{Notation and non-concentration}
For a finite set $E$, the notation $|E|$ denotes its cardinality or Lebesgue measure, and it ought to be clear from context. For $r>0$, let
\[
    E_r=\{x\in\R:d(x,E)<r\},\qquad
    d(x,E)=\inf_{y\in E}|x-y|.
\]

For a bounded set $E\subset\R$, let $N_r(E)$ be the least number of intervals of length $r$ required to cover $E$.

For a positive integer $k$, the notation
\[
    E^{(k)}=\left\{\prod_{j=1}^k x_j:x_1,\ldots,x_k\in E\right\}
\]
denotes a product set, whereas $E^k=E\times\cdots\times E$ denotes a Cartesian product. 

We write $X\lesssim Y$ if $X\leq CY$ for an implicit constant $C>0$, and $X\sim Y$ if both $X\lesssim Y$ and $Y\lesssim X$. Implicit constants are independent of $\de$; any permitted dependence on other parameters is specified where needed. The notation $O(t)$ denotes a quantity bounded in absolute value by a constant multiple of $t$. In expressions such as $\de^{O(\eta)}$ and $\de^{-O(\eta)}$, the constants in the exponents are positive and may change from line to line.

Let $0<\ka<1$ and $C\geq1$. We say $E \subset \R$ is a $(\de,\ka,C)$-set if it obeys the non-concentration condition
\[
    \cn{E\cap B(x,r)}\leq Cr^\ka\cn E
    \qquad\text{for }x \in \R \text{ and } \de\leq r \leq1.
\]
In particular, $\cn E\geq C^{-1}\de^{-\ka}$. 
A Borel probability measure $\mu$ is a $(\de,\ka,C)$-measure if
\[
    \mu(B(x,r)) \leq Cr^\ka
    \qquad\text{for } x \in \R \text{ and } \de\leq r\leq1.
\]
For a finite non-empty set $E$, its uniform counting measure is $$|E|^{-1}\sum_{x\in E}\delta_x, $$where $\delta_x$ is the unit point mass at $x$. This is a $(\de,\ka,C)$-measure whenever $E$ is a $\de$-separated $(\de,\ka,C)$-set. By contrast, the uniform probability measure on a measurable set $U$ with $0<|U| <\infty$ is
\[
    \frac{\mathbf 1_U(x)}{|U|}\,dx.
\]
In particular, the uniform measure on $E_\de$ is the normalised Lebesgue measure on that neighbourhood.

\subsection*{Uniform subsets}
We recall the definition of uniform sets. Fix a bounded  interval $I$ and an integer $T\geq1$. Subdivide each interval into $2^T$ equal children at every step, stopping at the first level $q$ whose intervals have length at most $\de$. A $\de$-separated set $E\subset I$ is \emph{uniform} for these subdivisions if, for each $1\leq j\leq q$, every interval at level $j-1$ meeting $E$ has the same number of children meeting $E$.

The following is standard and may be found in a number of references. See, for example,  \cite[Lemma 3.6]{Shmerkin2019}.
\begin{lemma}\label{lem.uniform-subset}
For every $\e>0$, one may choose $T$ so that, for all sufficiently small $\de>0$, every non-empty $\de$-separated set $E$ contains a uniform subset $E'$ with $|E'|\geq\de^\e|E|$.
\end{lemma}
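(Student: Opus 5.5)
The plan is a greedy multiscale construction along a single branching sequence. Fix $T$ large in terms of $\e$, and let $q \sim \log(1/\de)/T$ be the number of levels. Set $E_q := E$. For each $j$ from $q$ down to $1$, we will pass from $E_j$ to a subset $E_{j-1}\subseteq E_j$ that is uniform at level $j$. Concretely, every level-$(j-1)$ interval meeting $E_{j-1}$ should have exactly $N_j$ children meeting $E_{j-1}$, and the uniformity already achieved at the finer levels $j+1,\dots,q$ should be preserved.

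To do this, group the level-$(j-1)$ intervals $I$ meeting $E_j$ according to the number $n(I)\in\{1,\dots,2^T\}$ of their children that meet $E_j$. There are at most $2^T$ possible values. By the finer-level uniformity, every level-$j$ interval meeting $E_j$ carries the same number of points of $E_j$. Hence the mass of $E_j$ inside $I$ is proportional to $n(I)$, and pigeonholing gives a value $N_j$ whose class carries at least a $2^{-T}$ fraction of $E_j$.

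We then keep only the level-$(j-1)$ intervals with $n(I)=N_j$ and discard the rest. Discarding whole level-$(j-1)$ intervals cannot break uniformity at any finer level, since each surviving level-$j$ or finer interval keeps all of its children. So $E_{j-1}$ is uniform at levels $j,\dots,q$ and satisfies $|E_{j-1}|\ge 2^{-T}|E_j|$.

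After $q$ steps, $E' := E_0$ is uniform for all $1\le j\le q$. It satisfies
\[
|E'| \ge 2^{-Tq}|E|.
\]
This is not good enough, because $2^{-Tq}\approx \de$ is far too lossy. The loss has to be improved from $2^{-T}$ per level to something subexponential in $T$.

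The fix is to make the pigeonholing dyadic. Instead of exact values of $n(I)$, we pigeonhole over the dyadic ranges $n(I)\in[2^k,2^{k+1})$, of which there are only $T+1$. This costs a factor $(T+1)^{-1}$ per level, giving a total loss of
\[
(T+1)^{-q} = \de^{\log(T+1)/(T\log 2)} \ge \de^{\e}
\]
once $T$ is large in terms of $\e$.

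Within the chosen dyadic class the counts $n(I)$ now differ by up to a factor of $2$. To restore exact equality, we trim each interval in the class down to exactly $2^k$ of its children. Since every child carries equal mass, this keeps at least half the mass, at a further cost of a factor $2$ per level. The total loss is then at most $(2(T+1))^{-q}$, which is still at least $\de^{\e}$ for $T$ large. Trimming whole children again preserves uniformity at the finer levels, since the children we keep are untouched.

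The main obstacle is exactly this bookkeeping. We have to check that the per-level loss is $O(T)$ rather than $2^{T}$, and that processing the levels from finest to coarsest keeps the equal-mass property that justifies both the pigeonholing and the trimming. The boundary effect at the final level $q$, where intervals have length at most $\de$ and so contain at most $O(1)$ points of a $\de$-separated set, is harmless. The condition "$\de$ small enough" simply absorbs the constants.
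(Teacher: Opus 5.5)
Your argument is correct and is the standard one. The paper gives no proof of this lemma and only cites \cite[Lemma 3.6]{Shmerkin2019}, whose proof is exactly your finest-to-coarsest scheme: pigeonhole over the $T+1$ dyadic ranges of the branching number $n(I)$, trim to exactly $2^k$ children, and check that discarding whole intervals preserves uniformity at the finer levels, for a total loss of $(2T+2)^{-q}\geq\de^{\e}$ once $T$ is large and $\de$ is small. The equal-mass claim you rely on holds literally if the subintervals are half-open, since then each level-$q$ interval contains at most one point of $E$; alternatively, keeping the $2^k$ heaviest children makes equal mass unnecessary.
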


\subsection*{Fourier transforms and energy}
We use the convention
\[
    e(t)=e^{2\pi it},\qquad
    \widehat\mu(\xi)=\int_\R e(-\xi x)\,d\mu(x)
\]
for the Fourier transform of a finite Borel measure. The multiplicative convolution $\mu\boxtimes\nu$ is the image of $\mu\times\nu$ under $(x,y)\mapsto xy$; equivalently,
\[
    \int f\,d(\mu\boxtimes\nu)=\iint f(xy)\,d\mu(x)\,d\nu(y)
\]
for bounded Borel functions $f$. We write $\mu^{\boxtimes n}$ for the $n$-fold multiplicative convolution, so it is the image of $\mu^n$ under $(x_1,\ldots,x_n)\mapsto\prod_{j=1}^n x_j$.

For $0<s<1$, the $s$-energy of a finite positive Borel measure is
\[
    I_s(\mu)=\iint |x-y|^{-s}\,d\mu(x)\,d\mu(y).
\]
To define its regularised energy, fix a non-negative smooth function $\psi$, supported on $[-1,1]$, with $\int\psi=1$, and put $\psi_r(x)=r^{-1}\psi(x/r)$. The additive convolution with this function has density
\[
    (\mu*\psi_r)(x)=\int\psi_r(x-y)\,d\mu(y),
\]
and we set $I_s^r(\mu)=I_s((\mu*\psi_r)\,dx)$. Different fixed choices of such a smoothing function give comparable regularised energies. The standard Fourier representation of energy is
\[
    I_s(\mu)=c_s\int_\R|\widehat\mu(\xi)|^2|\xi|^{s-1}\,d\xi,
\]
where $c_s>0$ depends only on $s$. Since $|\widehat\psi(r\xi)|\leq1$, it follows that $I_s^r(\mu)\leq I_s(\mu)$ for every $r>0$.

\subsection*{Fourier decay}
We recall a consequence of our key external result \cite[Theorem 1.5]{OrponenDeSaxceShmerkin2023}, which is a quantitative version of \cite[Theorem 6]{bou}. It is not important for us that the result quantifies the number of multiplicative convolutions required, but we nevertheless choose to state the stronger of the two statements.

\begin{theorem}\label{thm.multiplicative-fourier-decay}
    Let $0<\kappa<1$ and let $n$ be a positive integer
    satisfying $n\kappa>1$. There exist $\epsilon,\tau>0$,
    depending only on $\kappa$ and $n$, such that the following
    holds for all $\delta>0$ sufficiently small.

    Let $\mu$ a be Borel probability measure
    supported on $[-1,1]$ satisfying
    \[
        I_\kappa^\delta(\mu)\leq\delta^{-\epsilon}.
    \]
    Then, for every $\xi\in\mathbb{R}$ with
    $\delta^{-1}\leq|\xi|\leq2\delta^{-1}$,
    \[
        \left|
        \int
        e^{-2\pi i\xi x_1\cdots x_n}
        \,d\mu(x_1)\cdots d\mu(x_n)
        \right|
        \leq|\xi|^{-\tau}.
    \]
\end{theorem}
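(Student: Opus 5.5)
The plan is to deduce Theorem \ref{thm.multiplicative-fourier-decay} from \cite[Theorem 1.5]{OrponenDeSaxceShmerkin2023}. I take that result in the following form. Given $\ka$ and $n$ with $n\ka>1$, there are $\e_0,\tau_0>0$ such that for every probability measure $\nu$ on $[-1,1]$ satisfying the Frostman condition $\nu(B(x,r))\leq \de^{-\e_0}r^\ka$ for all $x$ and all $\de\leq r\leq 1$, we have $|\widehat{\nu^{\boxtimes n}}(\xi)|\leq|\xi|^{-\tau_0}$ for $\de^{-1}\leq|\xi|\leq 2\de^{-1}$. The hypothesis in our statement is an energy bound rather than a Frostman condition. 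So the whole task is to convert the first into the second, at the cost of removing a small amount of mass, and then control the error this causes in the oscillatory integral.

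\textbf{Step 1: from energy to Frostman on most of the mass.} Write $\mu_\de=\mu*\psi_\de$. For $r\geq\de$ and any $x$, a routine computation gives $\int_{B(x,2r)}\mu_\de\gtrsim \mu(B(x,r))$, up to adjusting constants and the choice of $\psi$. Also, for dyadic $r\in[\de,1]$,
\[
\int r^{-\ka}\mu_\de(B(x,r))\,\mu_\de(x)\,dx \lesssim I_\ka^\de(\mu)\leq\de^{-\e}.
\]
Take a union over the $O(\log(1/\de))$ dyadic scales and apply Chebyshev. This shows that the set of $x$ for which $\mu(B(x,r))>M r^\ka$ holds for some dyadic $r\in[\de,1]$ has $\mu$-measure $\lesssim \de^{-\e}\log(1/\de)\,M^{-1}$. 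The passage between $\mu$ and $\mu_\de$ costs only constant factors at scales $\geq\de$. Let $G$ be the complement of this bad set, and set $M=\de^{-\e_0/2}$.

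\textbf{Step 2: the restricted measure is Frostman.} Let $\nu=\mu|_G/\mu(G)$. Then $\mu(G)\geq 1/2$ once $\e<\e_0/2$ and $\de$ is small. Take any ball $B(x,r)$ with $\de\leq r\leq1$ that meets $G$, say at a point $y$. Then $B(x,r)\subset B(y,2r)$, so
\[
\nu(B(x,r))\leq 2\mu(B(y,2r))\lesssim \de^{-\e_0/2}r^\ka\leq\de^{-\e_0}r^\ka
\]
for small $\de$. Hence the ODS theorem applies to $\nu$.

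\textbf{Step 3: error control.} Expand $\mu=\mu(G)\nu+\mu|_{G^c}$ in each of the $n$ factors. The integral against $\mu^n$ equals $\mu(G)^n$ times the integral against $\nu^n$, plus terms each involving at least one factor $\mu|_{G^c}$. These remaining terms have total absolute value at most $1-\mu(G)^n\leq n\,\mu(G^c)$, which is $\lesssim n\,\de^{\e_0/2-\e}\log(1/\de)$. Take $\e=\e_0/4$ and
\[
\tau=\tfrac12\min\{\tau_0,\e_0/4\}.
\]
With these choices the total is at most $|\xi|^{-\tau_0}+O(\de^{\e_0/4}\log(1/\de))\leq|\xi|^{-\tau}$ for $\de$ small, using $|\xi|\sim\de^{-1}$.

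\textbf{Main obstacle.} The only delicate point is Step 1. We need to pass correctly from the smoothed energy $I^\de_\ka$ to ball masses of the unsmoothed $\mu$, uniformly over all scales $r\in[\de,1]$, and only scales $r\geq\de$ are available. I would handle this by comparing $\mu(B(x,r))$ with $\int_{B(x,2r)}\mu_\de$ and by restricting to dyadic scales. I would also check the exact form of the hypothesis in \cite[Theorem 1.5]{OrponenDeSaxceShmerkin2023}: if it only requires Frostman for $r\geq\de^{\e_0}$ or at specific scales, the argument simplifies.
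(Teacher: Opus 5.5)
Your argument is correct, but it does more than the paper needs. The paper gives no proof here. The statement is quoted as the diagonal case $\mu_1=\cdots=\mu_n=\mu$, $s_1=\cdots=s_n=\ka$ of \cite[Theorem 1.5]{OrponenDeSaxceShmerkin2023}. That result is already formulated for probability measures on $[-1,1]$ under the regularised energy hypothesis $I^\de_{s_j}(\mu_j)\le\de^{-\e}$ with $s_1+\cdots+s_n>1$, and over the frequency range $\de^{-1}\le|\xi|\le 2\de^{-1}$. So no energy-to-Frostman conversion is required.

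Your Steps 1--3 are nonetheless sound. The one point you only sketched, passing from $\mu_\de$ to $\mu$ in the integrating measure, follows from
\[
\int\mu(B(y,r))\,d\mu(y)\le\int\mu_\de(B(x,4r))\,\mu_\de(x)\,dx\le(4r)^\ka I^\de_\ka(\mu),\qquad r\ge\de .
\]
This holds because $\mu_\de(B(x,4r))\ge\mu_\de(B(y,2r))\ge\mu(B(y,r))$ whenever $|x-y|\le\de\le r$. The Chebyshev, restriction and Bernoulli steps then go through as you wrote them.

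What your route buys is the observation that the energy and Frostman formulations are interchangeable up to shrinking $\e$ and $\tau$. This is the converse of the computation in the proof of Theorem \ref{thm.frequencies}, where non-concentration is turned into an energy bound. Note, however, where your starting point comes from. The Frostman form you assume, for every $n$ with $n\ka>1$ rather than only for $n$ large, is itself a consequence of the energy form via exactly that computation: Frostman at scales $[\de,1]$ gives $I^\de_\ka\lesssim\de^{-\e_0}\log(1/\de)$. It does not follow from Bourgain's original theorem, which needs $n$ large depending on $\ka$. So your proof effectively runs energy $\Rightarrow$ Frostman $\Rightarrow$ energy.
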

We are able to obtain the following useful corollary which gives pointwise results over a larger range of frequencies with little additional work. Compare also with \cite[Corollary 1.10]{OrponenDeSaxceShmerkin2023}.

    \begin{theorem}\label{thm.frequencies}

    Let $0<\kappa<1$, and let $n$ be a positive integer satisfying
    $n\kappa>1$. There exist $\epsilon_0,\tau>0$,
    depending only on $\kappa$ and $n$, such that the following
    holds for every $0<\epsilon\leq\epsilon_0$, every fixed $R\geq 2$, and all $\delta>0$ sufficiently small.

    Let $C\subset[1,R]$ be a $(\de,\kappa,\demm)$-set, and let $\mu$ be the uniform probability measure on its $\de$-neighbourhood. Then, for every $\xi\in\mathbb{R}$ satisfying
    $\de^{-\sqrt{\e}} \le |\xi| < \infty$, we have
    \[
        \left|\widehat{\mu^{\boxtimes n}}(\xi)\right|
        =
        \left|
            \int
            e^{-2\pi i\xi x_1\cdots x_n}
            \,d\mu(x_1)\cdots d\mu(x_n)
        \right|
        \le |\xi|^{-\tau}.
    \]
    \begin{proof}
         The non-concentration condition and the thickening give
        \[
        \mu(B(x,r))\lesssim
        \begin{cases}
            \demm r^\kappa,& \de\leq r\leq 1,\\
            \demm\de^{\kappa-1}r,& 0<r<\de.
        \end{cases}
        \]
        Consequently, the full energy satisfies
        \[
            I_\kappa(\mu):=\iint |x-y|^{-\kappa}\,d\mu(x)\,d\mu(y)
            \lesssim \demm\log(1/\de).
        \]
        Let $\epsilon_1>0$ be the energy exponent in Theorem \ref{thm.multiplicative-fourier-decay} and choose $\epsilon_0>0$ so that $2\sqrt{\epsilon_0}<\epsilon_1$. Rescale the support by $x\mapsto x/(R+1)$. For sufficiently small $\de$, the full energy of the rescaled measure, and hence its regularised energy at every scale, are at most $\de^{-2\e}$. If $|\xi|\geq\de^{-\sqrt{\e}}$, take
        \[
            \rho=((R+1)^n|\xi|)^{-1}.
        \]
        Then $\de^{-2\e}\leq\rho^{-\epsilon_1}$, so the cited theorem applies at scale $\rho$ and frequency $(R+1)^n\xi$. This gives the desired decay, with $\tau$ depending only on $\kappa$ and $n$.
    \end{proof}
\end{theorem}

The same argument clearly applies when $C$ is not $\de$-separated but still a $(\de,\ka,\demm)$-set. It also applies to the uniform probability measure on a fixed constant multiple of the $\de$-neighbourhood, after an arbitrarily small increase in $\e$. Only how small $\de$ must be depends on this constant and on $R$.

\section{Proof of Theorem \ref{thm.main}}
 
\begin{steps}
    \step{Setup and selection of constants} Fix $0 < \ka < \sigma < 1$ as in the statement of Theorem \ref{thm.main}. Choose an integer $1/\kappa<n<2/\kappa$, and let $\tau>0$ be given by Theorem \ref{thm.frequencies}. Shrinking $\tau$ if necessary, choose an integer $m\geq 2$ so that $\sigma<m\tau<1$. Choose $\eta>0$ sufficiently small depending on these parameters, and then choose $\e>0$ so small that
    \[
       \e(m+1)^{ 2/\eta}\leq\eta/10.
    \]
    The constants in exponents denoted by $O(\ \cdot\ )$ may depend on $\kappa$. Multiplicative implicit constants may also depend on the fixed parameters $m,n,n_0$ below.
\step{Finding a set with small multiplicative growth}
    Let $n_0 \geq 1$ be an integer so that
\begin{equation}\label{eq.choice}
    \cn{C^{((m+1)n_0)}} \leq \dmm \cn{C^{(n_0)}}.
\end{equation}
To select $n_0$, we search along $1,m+1,(m+1)^2,\ldots$. If \eqref{eq.choice} fails at the first $\lceil 2/\eta\rceil$ choices, then
\[
    \cn{C^{((m+1)^{\lceil 2/\eta\rceil})}}
    >\de^{-\eta\lceil 2/\eta\rceil}\cn C
    \gtrsim\de^{-2}|C|.
\]
This contradicts the $O(\de^{-1})$ covering bound for a fixed bounded interval, once $\de$ is sufficiently small. We may therefore select $n_0\leq(m+1)^{\lceil 2/\eta\rceil}$, and our choice of $\e$ ensures $n_0\e\leq\eta/10$. For the remainder of the proof define $C'=C^{(n_0)}$.

    \step{Obtaining non-concentration on iterated products} Replace $C'$ with a maximal $\de$-separated subset, and then apply Lemma \ref{lem.uniform-subset} to retain a uniform subset containing at least a $\de^{\e}$-fraction of its original size. Suppose that \eqref{eq.result} fails. By the Pl\"unnecke--Ruzsa inequalities, (see \cite[Corollary 3.4]{gut}) after replacing $A$ by a subset of comparable size, we have
    $$\cn{A+B}\lesssim \demm |A|$$
    and
    $$\cn{AC'^{(n)}} \lesssim \de^{-O(nn_0)\e} |A| \leq \de^{-O(n)\eta}|A|.$$

\begin{claim}\label{claim.measures}
For each integer $1\leq k\leq m$ the set $C'^{(k)}$ is a $(\de,\ka, \dm)$-set.
\begin{proof}
    By \eqref{eq.choice}, and by extending shorter products using a fixed element of $C$, we have
    \[
        \cn{C'^{(k)}}\lesssim\dm|C'|,
        \qquad
        \cn{CC'^{(k)}}\lesssim\dm|C'|.
    \]
    Let $\de < r < 1$ and let $I$ be an interval of length $r.$ The non-concentration condition on $C$ gives at least $\gtrsim\de^\e r^{-\kappa}$ points $c_i\in C$ separated by $10r$. Since $C'^{(k)}\subset[1,\infty)$, the sets $c_i(C'^{(k)}\cap I)$ mutually disjoint. Their union is contained in $CC'^{(k)}$, and hence
    \[
        \de^\e r^{-\kappa}\cn{C'^{(k)}\cap I}
        \lesssim\cn{CC'^{(k)}}
        \lesssim\dm|C'| \leq \dm \cn{C'^{(k)}}
    \]
  The desired claim then follows. 
\end{proof}
\end{claim}
    \step{Mapping products of $C'$ into $A$ via sums}
    We record a discretised version of \cite[Lemma 2.1]{KatzTao1999}.
    \begin{lemma}
    Let $X$ be a finite set and let $A_1,\ldots,A_n\subset\R$ be finite sets. For each $1 \leq i \leq n$ let $f_i: X \rightarrow A_i$. Then
    \begin{equation}
        |\{(x_0,\ldots,x_n) \in X^{n+1}: |f_i(x_{i-1}) - f_{i}(x_i)| \leq \de \text{ for all } 1 \leq i \leq n \}| \gtrsim_n \frac{|X|^{n+1}}{\prod_{i=1}^n\cn{A_i}}.
    \end{equation}
    \begin{proof}
        Partition $\R$ into indexed intervals of length $\de$ and replace each $f_i(x)$ by the index of the interval containing it. The resulting map has at most $O(\cn{A_i})$ values. Apply \cite[Lemma 2.1]{KatzTao1999} to these maps; equality of interval indices implies that the original values differ by at most $\de$.
    \end{proof}
    \end{lemma}
    We apply this with
    $$X = A \times B \times (C')^n,$$ and
\begin{equation}
    f_{2i-1}(a,b,c_1,\ldots,c_n) = a\prod_{\ell=1}^n c_\ell,
\end{equation}
\begin{equation}
        f_{2i}(a,b,c_1,\ldots,c_n) = a+b
\end{equation}
for $1 \leq i \leq m$. This tells us that
\begin{align}
    &\left|\left\{(a_j,b_j,\tilde c_j)_{j=0}^{2m}\in\bigl(A\times B\times(C')^n\bigr)^{2m+1}:\text{\eqref{eq.prod} and  \eqref{eq.sum} hold }\right\}\right| \notag\\
    &\gtrsim \frac{|A|^{2m+1}|B|^{2m+1}|C'|^{(2m+1)n}}{\cn{AC'^{(n)}}^m\cn{A+B}^m}
    \gtrsim \de^{O(mn)\eta}|A||B|^{2m+1}|C'|^{(2m+1)n}
    \label{eq.bound}
\end{align}
where $\tilde c_j=(c_{j,1},\ldots,c_{j,n})$ and
\begin{equation}\label{eq.prod}
    \left|a_{2i-2}\prod_{\ell=1}^n c_{2i-2,\ell} - a_{2i-1}\prod_{\ell=1}^n c_{2i-1,\ell}\right| \leq \de \quad\text{for all } 1 \leq i \leq m,
\end{equation}
\begin{equation}\label{eq.sum}
    |a_{2i-1}+b_{2i-1}-a_{2i}-b_{2i}|\leq\de \quad\text{for all }1\leq i\leq m.
\end{equation}
Once $a_0$, all $b_j$, and all $c_{j,\ell}$ are fixed, these relations determine at most $O(1)^{2m}$ possibilities for $(a_1,\ldots,a_{2m})$: each successive coordinate lies in an interval of length $O(\de)$, and $A$ is $\de$-separated.

Fix $\de<\rho<1$. For $1\leq i\leq m$, the number of elements in the set above with $|b_{2i-1}-b_{2i}|<\rho$ is
$$\lesssim \demm \rho^\kappa |A| |B|^{2m+1}|C'|^{(2m+1)n}.$$
Indeed, once all other $b_j$ are fixed, the non-concentration condition on $B$ leaves at most $\lesssim\demm\rho^\kappa|B|$ choices for $b_{2i}$. We then fix $a_0$ and all $c_{j,\ell}$ and use the preceding bound on the remaining $a_j$. Choosing
$$\rho = \de^{O(mn/\kappa)\eta}$$
with a sufficiently large positive constant in the exponent, we are able to discard elements with $|b_{2i-1}-b_{2i}|<\rho$ for any $i$, and only affect \eqref{eq.bound} by a constant multiple.

Apply the pigeonhole principle to find $a_0\in A$, $b_0,\ldots,b_{2m}\in B$, and
$$\tilde c_1,\tilde c_3,\ldots,\tilde c_{2m-1},\tilde c_{2m}\in(C')^n$$
with $$|b_{2i-1}-b_{2i}|\geq\rho\quad\text{for each }1\leq i\leq m,$$
to see that
\begin{align}
&\left|\left\{(a_1,\ldots,a_{2m},\tilde c_0,\tilde c_2,\ldots,\tilde c_{2m-2})\in A^{2m}\times(C')^{mn}:\text{\eqref{eq.prod} and \eqref{eq.sum} hold}\right\}\right| \notag\\
&\gtrsim \de^{O(mn)\eta}|C'|^{mn}.
\end{align}
Let $\Omega$ be the coordinate projection of this set which removes the first $2m$ coordinates. Again, once $a_0$ and the elements of $B$ and $C'$ are fixed, there are at most $O(1)^{2m}$ choices for $(a_1,\ldots,a_{2m})$. Therefore
$$|\Omega|\gtrsim\de^{O(mn)\eta}|C'|^{mn}.$$

We now exploit \eqref{eq.prod} and \eqref{eq.sum}. It is simple algebra to show that
\[
    a_{2m}=b_{2m-1}-b_{2m}
    +\sum_{i=1}^m u_i\prod_{j=i}^m\prod_{\ell=1}^n c_{2j-2,\ell}+O(\de),
\]
where
$$u_1=\frac{a_0}{\displaystyle\prod_{j=1}^m\prod_{\ell=1}^n c_{2j-1,\ell}},$$
and
$$u_i=\frac{b_{2i-3}-b_{2i-2}}{\displaystyle\prod_{j=i}^m\prod_{\ell=1}^n c_{2j-1,\ell}},\qquad 2\leq i\leq m.$$
Write $\phi:\R^{mn}\to\R$ for the map
\[
    \phi(\tilde x_1,\ldots,\tilde x_m)
    =b_{2m-1}-b_{2m}+\sum_{i=1}^m u_i\prod_{j=i}^m\prod_{\ell=1}^n x_{j,\ell}.
\]
In particular, for each $1\leq i\leq m$ we have $\rho\lesssim|u_i|\leq 2$. Therefore we have
\begin{align}\label{eq.boundA1}
    \de^{O(mn)\eta}|C'|^{mn}&\lesssim\left|\left\{\tilde c\in(C')^{mn}:d(\phi(\tilde c),A)\lesssim\de\right\}\right|,
\end{align}
and 
\begin{align}\label{eq.boundA2}
\frac{|\{\tilde c\in(C')^{mn}:d(\phi(\tilde c),A)\lesssim\de\}|}{\max_{a\in A}|\{\tilde c\in(C')^{mn}:d(\phi(\tilde c),a)\lesssim\de\}|}\lesssim|A|.
\end{align}
    \step{Bounding the multiplicity of $\phi$ with Fourier decay}
    Fix $a\in A$. We will bound
    \begin{equation}\label{eq.solutions}
    |\{\tilde c\in(C')^{mn}:d(\phi(\tilde c),a)\lesssim\de\}|.
    \end{equation}
    We use the following kernel to detect solutions. Let $K_\de:\R\to(0,1]$ be defined by
$$K_\de(x)=\frac{1}{1+(x/\de)^2}.$$
Indeed, when $|x|\lesssim\de$, we have $K_\de(x)\sim 1$. Further, $K_\de(x)>0$. There is the well-known exact identity
$$K_\de(x)=\pi\de\int_\R e^{-2\pi\de|\xi|}e(\xi x)\,d\xi,$$
where $e(t)=e^{2\pi it}$. We may first bound \eqref{eq.solutions} by
\begin{align}
    &\lesssim\sum_{\tilde c\in(C')^{mn}}K_\de(\phi(\tilde c)-a)\notag\\
    &\lesssim\de^{-mn}\int_{(C'_\de)^{mn}}K_\de(\phi(\tilde x)-a)\,d\tilde x.
\end{align}
Here $C'_\de$ denotes the $\de$-neighbourhood of $C'$.

We perform a change of variables. For $1\leq j\leq m$ and $1\leq\ell\leq n$, define
\[
    y_{j,\ell}=\prod_{h=j}^m x_{h,\ell},\qquad
    \tilde y_j=(y_{j,1},\ldots,y_{j,n}).
\]
The Jacobian of the inverse map is
\[
    d\tilde x=\left(\prod_{j=2}^m\prod_{\ell=1}^n y_{j,\ell}^{-1}\right)d\tilde y\lesssim d\tilde y,
\]
since the $x$ coordinates lie in $[1/2,2^{2n_0}]$. Each $y_{j,\ell}$ lies in $(C'^{(m-j+1)})_{c_0\de}$ for a fixed constant $c_0=c_0(m,n_0)$. The positivity of $K_\de$ allows us to enlarge the domain and bound \eqref{eq.solutions} by
\[
    \lesssim\de^{-mn}
    \int_{\prod_{j=1}^m((C'^{(m-j+1)})_{c_0\de})^n}
    K_\de\left(b_{2m-1}-b_{2m}-a+\sum_{j=1}^m u_j\prod_{\ell=1}^n y_{j,\ell}\right)\,d\tilde y.
\]
We now insert the Fourier representation of $K_\de$. By Fubini and the triangle inequality, this is bounded by
\begin{equation}\label{eq.dominate}
    \lesssim\de^{1-mn}\int_\R e^{-2\pi\de|\xi|}
    \prod_{j=1}^m\left|
        \int_{((C'^{(m-j+1)})_{c_0\de})^n}
        e\left(\xi u_j\prod_{\ell=1}^n y_\ell\right)\,d\tilde y
    \right|\,d\xi.
\end{equation}

We first consider the high frequencies $|\xi|\geq\de^{-\eta^{1/4}}$. Fix $1\leq j\leq m$, and let $\mu_j$ be the uniform probability measure on the one-dimensional set $(C'^{(m-j+1)})_{c_0\de}$. By Claim \ref{claim.measures} we may apply Theorem \ref{thm.frequencies} with $\e=4\eta$, provided $\eta$ is small enough. Moreover, our choice of $\rho$ and the bound $|u_j|\gtrsim\rho$ ensure that
\[
    |u_j\xi|\gtrsim\rho\de^{-\eta^{1/4}}\geq\de^{-2\sqrt{\eta}}
\]
for sufficiently small $\eta$ and then sufficiently small $\de$. Thus
\begin{align}
    \left|\int_{((C'^{(m-j+1)})_{c_0\de})^n}
        e\left(\xi u_j\prod_{\ell=1}^n y_\ell\right)\,d\tilde y\right|
    &\lesssim\de^n\cn{C'^{(m-j+1)}}^n
        \left|\widehat{\mu_j^{\boxtimes n}}(-u_j\xi)\right|\notag\\
    &\lesssim\de^{n-O(n)\eta}|C'|^n|\rho\xi|^{-\tau}.
\end{align}
Multiplying these bounds and absorbing $\rho^{-m\tau}$ into the loss, we may bound the product in \eqref{eq.dominate} by
\begin{equation}\label{eq.productbound}
    \de^{mn-O(mn)\eta}|C'|^{mn}|\xi|^{-m\tau}.
\end{equation}
Since $0<m\tau<1$, the high-frequency contribution to \eqref{eq.dominate} is at most
\begin{equation}
    \lesssim\de^{1-O(mn)\eta}|C'|^{mn}
    \int_{\de^{-\eta^{1/4}}}^\infty e^{-2\pi\de\xi}\xi^{-m\tau}\,d\xi
    \lesssim\de^{m\tau-O(mn)\eta}|C'|^{mn}.
\end{equation}
For the low frequencies $|\xi|<\de^{-\eta^{1/4}}$, we trivially bound the product in \eqref{eq.dominate} by
$$\de^{mn-O(mn)\eta}|C'|^{mn}.$$
Basic calculus then bounds the low-frequency contribution by
\begin{equation}
    \lesssim\de^{-O(mn)\eta}|C'|^{mn}
    \left(1-e^{-2\pi\de^{1-\eta^{1/4}}}\right)
    \lesssim\de^{1-\eta^{1/4}-O(mn)\eta}|C'|^{mn}.
\end{equation}
Taking $\eta$ small enough that $1-\eta^{1/4}>m\tau$, both contributions are bounded by $\lesssim\de^{m\tau-O(mn)\eta}|C'|^{mn}$. Recalling \eqref{eq.boundA1} and \eqref{eq.boundA2}, we conclude that
$$|A|\gtrsim\de^{-m\tau+O(mn)\eta}>\de^{-\sg},$$
provided $\eta$ is sufficiently small and then $\de$ is sufficiently small. This contradicts $|A| \leq \de^{-\sg}$.
\end{steps}
\bibliographystyle{alpha}
	\bibliography{references}

@article {bou,
	AUTHOR = {Bourgain, J.},
	TITLE = {The discretized sum-product and projection theorems},
	JOURNAL = {J. Anal. Math.},
	FJOURNAL = {Journal d'Analyse Math\'{e}matique},
	VOLUME = {112},
	YEAR = {2010},
	PAGES = {193--236},
	ISSN = {0021-7670,1565-8538},
	MRCLASS = {11B30},
	MRNUMBER = {2763000},
	MRREVIEWER = {Serge\u{\i}\ V.\ Konyagin},
	DOI = {10.1007/s11854-010-0028-x},
}

@article {bougam,
AUTHOR = {Bourgain, Jean and Gamburd, Alex},
TITLE = {On the spectral gap for finitely-generated subgroups of {$\rm
SU(2)$}},
JOURNAL = {Invent. Math.},
FJOURNAL = {Inventiones Mathematicae},
VOLUME = {171},
YEAR = {2008},
NUMBER = {1},
PAGES = {83--121},
ISSN = {0020-9910,1432-1297},
MRCLASS = {22E30 (11B75 43A75)},
MRNUMBER = {2358056},
MRREVIEWER = {Ben\ Joseph\ Green},
DOI = {10.1007/s00222-007-0072-z},
URL = {https://0-doi-org.pugwash.lib.warwick.ac.uk/10.1007/s00222-007-0072-z},
}

@article {gut,
	AUTHOR = {Guth, Larry and Katz, Nets Hawk and Zahl, Joshua},
	TITLE = {On the discretized sum-product problem},
	JOURNAL = {Int. Math. Res. Not. IMRN},
	FJOURNAL = {International Mathematics Research Notices. IMRN},
	YEAR = {2021},
	NUMBER = {13},
	PAGES = {9769--9785},
	ISSN = {1073-7928},
	MRCLASS = {11B30 (05D10 28A99)},
	MRNUMBER = {4283564},
	MRREVIEWER = {Hanbin Zhang},
	DOI = {10.1093/imrn/rnz360},
	URL = {https://doi.org/10.1093/imrn/rnz360},
}

@article{o2025simple,
  title={Simple proofs of discretised projection theorems},
  author={O'Regan, William and Shmerkin, Pablo and Wang, Hong},
  journal={arXiv preprint arXiv:2511.21656},
  year={2025}
}

@misc{OrponenDeSaxceShmerkin2023,
  author        = {Orponen, Tuomas and de Saxc{\'e}, Nicolas
                   and Shmerkin, Pablo},
  title         = {On the {Fourier} decay of multiplicative convolutions},
  year          = {2023},
  eprint        = {2309.03068},
  archivePrefix = {arXiv},
  primaryClass  = {math.CA},
  url           = {https://arxiv.org/abs/2309.03068},
  note          = {To appear in the American Journal of Mathematics}
}

@article{KatzTao1999,
  author  = {Katz, Nets Hawk and Tao, Terence},
  title   = {Bounds on arithmetic projections, and applications
             to the {Kakeya} conjecture},
  journal = {Mathematical Research Letters},
  volume  = {6},
  number  = {5-6},
  pages   = {625--630},
  year    = {1999},
  doi     = {10.4310/MRL.1999.v6.n6.a3}
}

@article{Bourgain2005SumProduct,
  author  = {Bourgain, Jean},
  title   = {More on the sum-product phenomenon in prime fields and its applications},
  journal = {International Journal of Number Theory},
  year    = {2005},
  volume  = {1},
  number  = {1},
  pages   = {1--32},
  doi     = {10.1142/S1793042105000108}
}

@article{orpshabc,
  title   = {Projections, {F}urstenberg sets, and the {ABC} sum-product problem},
  author  = {Orponen, Tuomas and Shmerkin, Pablo},
  journal = {Journal of the American Mathematical Society},
  volume  = {39},
  number  = {3},
  pages   = {857--913},
  year    = {2026},
  doi     = {10.1090/jams/1073}
}

@article {bou03,
	AUTHOR = {Bourgain, J.},
	TITLE = {On the {E}rd\"os-{V}olkmann and {K}atz-{T}ao ring conjectures},
	JOURNAL = {Geom. Funct. Anal.},
	FJOURNAL = {Geometric and Functional Analysis},
	VOLUME = {13},
	YEAR = {2003},
	NUMBER = {2},
	PAGES = {334--365},
	ISSN = {1016-443X},
	MRCLASS = {11K55 (28A80)},
	MRNUMBER = {1982147},
	MRREVIEWER = {Ben Joseph Green},
	DOI = {10.1007/s000390300008},
	URL = {https://doi.org/10.1007/s000390300008},
}

@misc{ORegan2026,
  author        = {O'Regan, William},
  title         = {{Bourgain}'s projection theorem over normed division algebras},
journal={arXiv preprint arXiv:2603.29145},
  year          = {2026},
  eprint        = {2603.29145},
  archivePrefix = {arXiv},
  primaryClass  = {math.CA},
  doi           = {10.48550/arXiv.2603.29145},
  url           = {https://arxiv.org/abs/2603.29145}
}

@article{BourgainKatzTao2004,
  author  = {Bourgain, Jean and Katz, Nets and Tao, Terence},
  title   = {A sum-product estimate in finite fields, and applications},
  journal = {Geometric and Functional Analysis},
  volume  = {14},
  number  = {1},
  pages   = {27--57},
  year    = {2004},
  doi     = {10.1007/s00039-004-0451-1}
}

@article{Bourgain2005DiffieHellman,
  author  = {Bourgain, Jean},
  title   = {Estimates on exponential sums related to the
             {Diffie--Hellman} distributions},
  journal = {Geometric and Functional Analysis},
  volume  = {15},
  number  = {1},
  pages   = {1--34},
  year    = {2005},
  doi     = {10.1007/s00039-005-0500-4}
}

@article {kat,
	AUTHOR = {Katz, Nets Hawk and Tao, Terence},
	TITLE = {Some connections between {F}alconer's distance set conjecture
		and sets of {F}urstenburg type},
	JOURNAL = {New York J. Math.},
	FJOURNAL = {New York Journal of Mathematics},
	VOLUME = {7},
	YEAR = {2001},
	PAGES = {149--187},
	MRCLASS = {28A80 (28A75 28A78)},
	MRNUMBER = {1856956},
	MRREVIEWER = {Miguel Angel Mart\'{\i}n},
	URL = {http://nyjm.albany.edu:8000/j/2001/7_149.html},
}

@article{Shmerkin2019,
  author  = {Shmerkin, Pablo},
  title   = {On {Furstenberg}'s intersection conjecture,
             self-similar measures, and the {$L^q$} norms
             of convolutions},
  journal = {Annals of Mathematics},
  series  = {2},
  volume  = {189},
  number  = {2},
  pages   = {319--391},
  year    = {2019},
  doi     = {10.4007/annals.2019.189.2.1}
}
    \end{document}